\documentclass[letterpaper,12pt,oneside]{amsart}
\usepackage{amsmath}
\usepackage{amssymb}
\usepackage{latexsym}
\usepackage{amsfonts}
\usepackage[latin1]{inputenc}
\usepackage[english]{babel}
\usepackage{multicol}
\usepackage{xcolor}
\usepackage{oldgerm}
\usepackage{enumerate}
\usepackage{hyperref}
\usepackage{lipsum}
\usepackage{amsfonts}
\usepackage{mathtools}
\usepackage{multicol}
\setlength{\parindent}{0.5cm}

\setlength{\parindent}{0.5cm}
\setlength{\topmargin}{-.5cm}
\setlength{\textheight}{23cm}

\newcommand{\seqnum}[1]{\href{https://oeis.org/#1}{\underline{#1}}}

\setlength{\oddsidemargin}{1cm} \setlength{\evensidemargin}{0.0in}
\setlength{\textwidth}{15cm}

\theoremstyle{plain}
\newtheorem{theorem}{Theorem}
\newtheorem{corollary}[theorem]{Corollary}
\newtheorem{lemma}[theorem]{Lemma}

\theoremstyle{definition}
\newtheorem{definition}[theorem]{Definition}
\newtheorem{example}[theorem]{Example}
\newtheorem{conjecture}[theorem]{Conjecture}

\theoremstyle{remark}
\newtheorem{remark}[theorem]{Remark}

\newcommand{\Z}{{\mathbb Z}}

\newcommand{\A}{{\mathcal A}}

\def\A{\alpha}

\def\B{\beta}

\def\L{\Lambda}

\def\G{\gamma}

\def\z{\zeta}

\def\Z{\mathbb{Z}}
\def\K{\mathbb{K}}
\def\Q{\mathbb{Q}}

\def\el{\ell}


\begin{document}

\title[Repdigits and the Narayana sequence]{Repdigits in Narayana's Cows Sequence and their Consequences}

\author{Jhon J. Bravo}
\address{Departamento de Matem\'aticas\\ Universidad del Cauca\\ Calle 5 No 4--70\\Popay\'an, Colombia.}
\email{jbravo@unicauca.edu.co}

\author{Pranabesh Das}
\address{University of Waterloo \\ Department of Pure Mathematics \\ Waterloo \\ Canada}
\email{pranabesh.math@gmail.com}

\author{Sergio Guzm\'an}
\address{Facultad de Ciencias en F\'isica y Matem\'aticas\\ Universidad Aut\'onoma de Chiapas\\ Mexico}
\email{strebeinsam@gmail.com}



\begin{abstract}
Narayana's cows sequence satisfies the third-order linear recurrence relation $N_n=N_{n-1}+N_{n-3}$ for $n \geq 3$ with initial conditions $N_0=0$ and $N_1=N_2=1$. In this paper, we study $b$-repdigits which are sums of two Narayana numbers. We explicitly determine these numbers for the bases $2\le b\leq100$ as an illustration. We also obtain results on the existence of Mersenne prime numbers, 10-repdigits, and numbers with distinct blocks of digits in the Narayana sequence. The proof of our main theorem uses lower bounds for linear forms in logarithms and a version of the Baker-Davenport reduction method in Diophantine approximation. 

\medskip

\noindent\textbf{Keywords and phrases.}\, Narayana sequence, $b$-repdigit, linear forms in logarithm, reduction method.

\noindent\textbf{2010 Mathematics Subject Classification.}\, 11B83, 11J86.

\end{abstract}

\maketitle

In 1356, the Indian mathematician Narayana Pandit wrote his famous book titled \emph{Ganita Kaumudi} where he proposed the following problem of a herd of cows and calves: \emph{A cow produces one calf every year. Beginning in its fourth year, each calf produces one calf at the beginning of each year. How many calves are there altogether after 20 years?} \cite{Allouche96}.

We can translate this problem into our modern language of recurrence sequences. We observe that the number of cows increased by one after one year, increased by one after two years, increased by one after three years and increased by two after four years and so on. Hence we obtain the sequence $1,1,1,2,\ldots$. In the $n$-th year, Narayana's problem can be written as the following linear recurrence sequence:
\[
N_n=N_{n-1} + N_{n-3}
\]
for $n\geq 3$ with $N_0=0$, $N_1=N_2=1$ as initial conditions. The first few terms of the sequence are 
\[
0,1,1,1,2,3,4,6,9,13,19,\ldots \quad \text{(sequence \seqnum{A000930})}.
\]
In this sequence each number is computed recursively by adding the previous number in the sequence and the number two places previous to the number. The defining relation in the Narayana sequence is very similar to the famous Fibonacci sequence but with a delay in the recursion which makes it a third-order linear recurrence sequence. This can be thought of as a ``delayed morphism" and has interesting applications in automata theory. It has been considered by Allouche and Johnson \cite{Allouche96}.

Let $b\geq 2$ be an integer. A positive integer greater than $b$ is said to be a repdigit in base $b$, or simply a $b$-repdigit, if it has only one distinct digit in its base $b$ representation. In particular, such numbers have the form $a(b^{\ell}-1)/(b-1)$ for some $\ell \geq 2$ and $1\leq a\leq b-1$. For example, $11$ is a repdigit in base $10$ whereas $399$ is not a repdigit in base $10$. Although $399= 19\cdot 20+ 19=[19,19]_{20}$ shows that it is a repdigit in base $20$. We omit to mention the base and simply write repdigit when the base $b$ is $10.$

There are several papers in the literature that have considered diophantine equations involving repdigits in the Fibonacci, Lucas or Pell sequences. For example, Luca \cite{FL} showed that 55 and 11 are the only repdigits in the Fibonacci and Lucas sequences, respectively. Faye and Luca \cite{Bernadette-Luca1} proved that there are no repdigits in the Pell sequence. Luca, Normenyo, and Togb\'e in \cite{NormenyoLucaTogbe3,NormenyoLucaTogbe2} determined repdigits which are sums of four Fibonacci, Lucas or Pell numbers.  The special cases of repdigits expressible as sums of three Fibonacci, Lucas or Pell numbers were solved earlier by Normenyo, Luca, and Togb\'e \cite{LuRep,NormenyoLucaTogbe4,NormenyoLucaTogbe1}.

In this paper, we are interested in finding all $b$-repdigits which are the sum of two Narayana numbers for the bases $2\le b\leq 100$. More precisely, we determine all the solutions of the Diophantine equation
\begin{equation}\label{eqn}
N_n + N_m = [a,\ldots, a]_b =a \left(\frac{b^\ell-1}{b-1}\right),
\end{equation}
in integers $(n,m,\ell,a,b)$ with $0\leq m\leq n$, $2\le b\leq 100$, $1\leq a \leq b-1$ and  $\ell\geq2$. 

Several authors have investigated variants of this problem. For instance, Bollman, Hern\'andez, and Luca \cite{BHL} found all Fibonacci numbers which are sums of three factorials. Luca and Siksek \cite{LuSi} found all  factorials that can be written as sums of two and three Fibonacci numbers. Furthermore, D\'iaz and Luca \cite{D-L} determined all Fibonacci numbers that are the sum of two repdigits. Bravo, Luca and several other authors considered \cite{BGL-prep-14,BGL2,BL1,BLMonatsh15,DM} similar problems in generalized Fibonacci numbers.  

Before presenting our result, we note that if $\ell=2$, then equation \eqref{eqn} can be written as $N_n + N_m= ab+a=[a,a]_b$. We have computed the long list of trivial solutions but we will not list them in this paper. The solutions to the equation \eqref{eqn} are not particularly interesting for $\ell=2$. In this paper, we call them trivial solutions. We are particularly interested in solutions (non-trivial) for $\ell \ge 3$. 

Our main result on the solutions of equation \eqref{eqn} is the following: 

\begin{theorem}\label{mainthm}
The Diophantine equation
\begin{equation*}
N_n + N_m = a \left(\frac{b^\ell-1}{b-1}\right),
\end{equation*}
has only finitely many non-trivial solutions in integers $n,m,\ell,a,b$ with $0\leq m\leq n$, $2\le b\leq 100$, $1\leq a \leq b-1$ and  $\ell\geq3$. Moreover, all the solutions for  $\ell\geq 3$ are denoted by the tuple $(n,m,\ell,a,b)$ and they are listed in Section \ref{sec5}. In particular, equation \eqref{eqn} has no solutions for $\ell \geq 7$. 
\end{theorem}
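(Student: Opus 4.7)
The plan is to apply Baker's theory of linear forms in logarithms twice, and then run a Baker--Davenport style reduction for each base $b\in\{2,\ldots,100\}$ separately, following the pattern of the papers of Luca, Togb\'e, Bravo and others cited in the introduction.

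First I would record a Binet-type formula. The characteristic polynomial $x^3-x^2-1$ has one real root $\alpha\approx 1.4656$ and two complex conjugate roots $\beta,\bar\beta$ of modulus less than $1$; writing $N_n=c_\alpha\alpha^n+c_\beta\beta^n+c_{\bar\beta}\bar\beta^n$, the last two terms are uniformly bounded, and standard estimates give $\alpha^{n-2}\leq N_n\leq\alpha^{n-1}$ for $n$ large. Combined with $a(b^\ell-1)/(b-1)\leq b^\ell$, this immediately yields a crude bound of the form $n\leq C\ell\log b$, so bounding $\ell$ in terms of $\log b$ will be equivalent to bounding $n$.

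Next I would produce two linear forms in logarithms. Rewriting equation \eqref{eqn} as
\[
c_\alpha\alpha^n-\frac{a}{b-1}\,b^\ell=-N_m-\frac{a}{b-1}+\bigl(N_n-c_\alpha\alpha^n\bigr)
\]
and dividing by $(a/(b-1))b^\ell$, one gets an inequality of the shape
\[
\left|\frac{c_\alpha(b-1)}{a}\,\alpha^n\,b^{-\ell}-1\right|<C\,\alpha^{m-n},
\]
to which Matveev's theorem on three logarithms $\log\alpha$, $\log b$, $\log(c_\alpha(b-1)/a)$ applies, producing an explicit bound of the form $n-m\leq C_1(\log b)(\log n)$. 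To bound $n$ itself (not merely $n-m$), I would repeat the argument while keeping $N_m$ on the main side: factoring $c_\alpha\alpha^n$ out of $c_\alpha\alpha^n+c_\alpha\alpha^m$ creates the algebraic factor $1+\alpha^{m-n}$, and a second invocation of Matveev (now on four logarithms) yields an absolute bound of the shape $n<C_2(\log b)^4\log\log b$. Since $\ell\log b\leq C n\log\alpha$, this gives an absolute upper bound on $\ell$ uniform in $b\leq 100$, proving the finiteness claim.

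These Baker-type bounds are astronomical, so for the sharp statement that no solutions exist for $\ell\geq 7$, a reduction step is essential. For each fixed $b\in\{2,\ldots,100\}$ I would convert the three-logarithm inequality into a continued-fraction approximation problem for $\log\alpha/\log b$, applying the Baker--Davenport lemma (or LLL on the associated lattice) to lower the bound on $n$ to a few hundred; a direct enumeration in this reduced range then produces the complete list of solutions reported in Section \ref{sec5}, and in particular verifies that none have $\ell\geq 7$. The main obstacle I anticipate is the second Matveev application: the fourth logarithm $\log(1+\alpha^{m-n})$ has a height that must be bounded uniformly (independent of $m$ and $n$) and the numerical constants grow quickly, which in turn inflates the reduction bound. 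A secondary technical nuisance is that $99$ separate reductions must be carried out and each may present edge cases (for instance when $a/(b-1)$ causes small linear forms by accident), so the reduction procedure must be coded once and run uniformly across all bases.
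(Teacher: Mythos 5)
Your plan is essentially the paper's proof: the same Binet-type decomposition, two applications of Matveev's theorem (first bounding $n-m$, then $n$), and a base-by-base Baker--Davenport reduction followed by enumeration up to a few hundred. But one step in your outline is mis-stated, and it is exactly the point your ``main obstacle'' remark touches: the height of $1+\alpha^{m-n}$ \emph{cannot} be bounded uniformly in $m$ and $n$. Although the real number $\log(1+\alpha^{m-n})$ is bounded (it lies in $(0,\log 2]$), Matveev's theorem charges for the logarithmic \emph{height}, and one only has $h(1+\alpha^{m-n})\le \log 2+(n-m)\,h(\alpha)=\log 2+(n-m)(\log\alpha)/3$, which grows linearly in $n-m$. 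The resolution---and what the paper actually does---is to insert the stage-one bound $(n-m)\log\alpha<10^{14}\log n\log^{2}b$ into this height estimate, taking $A_3:=1.1\times10^{14}\log n\log^{2}b$; the paper also keeps $t=3$ by absorbing the factor into the single algebraic number $\gamma_3=a/\bigl((b-1)C_{\alpha}(1+\alpha^{m-n})\bigr)$ rather than invoking a fourth logarithm, though that difference is cosmetic. The real price is that the second Matveev bound then reads $n<2\times10^{27}\log^{2}n\log^{3}b$, with $n$ on both sides, so an additional analytic lemma is needed to untangle it: the paper uses the Guzm\'an--Luca lemma ($x/\log^{2}x<T$ implies $x<4T\log^{2}T$) to arrive at $n<6.5\times10^{31}\log^{5}b$---slightly worse than your predicted $(\log b)^{4}\log\log b$, but equally effective since $b\le 100$.

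Two further details your sketch compresses. First, Matveev's theorem requires $\Lambda\neq 0$; the paper verifies this by applying the Galois automorphism $\alpha\mapsto\beta$ to the putative identity $C_{\alpha}\alpha^{n+2}=ab^{\ell}/(b-1)$ and comparing absolute values ($|C_{\beta}\beta^{n+2}|<1$ while $ab^{\ell}/(b-1)\ge 4$). Second, the reduction is necessarily two-staged: the first linear form only controls $n-m$ (with the case $m=0$ treated separately, where it bounds $n$ directly), and the second form's inhomogeneous term $\widehat{\mu}$ depends on $n-m$, so the Dujella--Peth\H{o} lemma must be rerun for every value $n-m\in\{0,\ldots,260\}$ in addition to every pair $(a,b)$---it is not a single continued-fraction problem per base, as your last paragraph suggests. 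With these corrections your plan coincides with the published argument.
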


As a consequence we obtain the following corollaries.

\begin{corollary}\label{coro}
All the solutions of the Diophantine equation 
\[
N_n = a\left(\frac{b^\ell-1}{b-1}\right),
\]
in non-negative integers $n,\ell,a,b$ with $2\le b\leq100$, $1\leq a \leq b-1$ and  $\ell\geq 3$ are given by the tuples
\[
(n,\ell,a,b)\in\{(9,3,1,3),(15,3,3,6)\}.
\] 
Namely, we have $N_9=13=[1,1,1]_3$ and $N_{15}=129=[3,3,3]_6$.  
\end{corollary}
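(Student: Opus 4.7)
The plan is to derive this corollary directly from Theorem \ref{mainthm} by specialization. The key observation is that $N_0 = 0$, so the equation
\[
N_n = a\left(\frac{b^\ell - 1}{b-1}\right)
\]
is precisely the $m = 0$ case of equation \eqref{eqn}. Any solution $(n,\ell,a,b)$ of the corollary's equation (with $n \ge 1$, since $N_0 = 0$ cannot equal a repdigit having $a \ge 1$) therefore corresponds to a non-trivial solution $(n,0,\ell,a,b)$ of \eqref{eqn} with $\ell \ge 3$.

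Given this reduction, my plan is simply to invoke the explicit finite list of solutions produced by Theorem \ref{mainthm} in Section \ref{sec5} and extract those tuples whose second coordinate is $m = 0$. I would expect -- and the statement of the corollary claims -- that exactly two such tuples occur, namely $(9,0,3,1,3)$ and $(15,0,3,3,6)$, which are then repackaged in the corollary as $(n,\ell,a,b) = (9,3,1,3)$ and $(15,3,3,6)$.

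To close the argument, I would verify these numerically. Using the recurrence $N_n = N_{n-1} + N_{n-3}$ from the initial values $N_0=0, N_1=N_2=1$, one computes $N_9 = 13$ and $N_{15} = 129$, and then checks $1 \cdot (3^3-1)/(3-1) = 13 = [1,1,1]_3$ and $3 \cdot (6^3-1)/(6-1) = 3 \cdot 43 = 129 = [3,3,3]_6$. No fresh linear-form-in-logarithms bound or Baker--Davenport reduction is required: the substantive analytic content sits entirely inside Theorem \ref{mainthm}. The only ``obstacle'' is bookkeeping -- making sure the enumeration in Section \ref{sec5} has been faithfully scanned for all tuples with second coordinate $0$, and that no single-term Narayana repdigit with $2 \le b \le 100$ and $\ell \ge 3$ has been overlooked there.
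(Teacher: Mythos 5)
Your proposal matches the paper's own argument exactly: the paper proves Corollary \ref{coro} by taking $N_m = N_0 = 0$ in equation \eqref{eqn} and reading off the tuples $(9,0,3,1,3)$ and $(15,0,3,3,6)$ from Table \ref{table1}, which is precisely your specialization-plus-bookkeeping route. Your added numerical verification of $N_9 = 13$ and $N_{15} = 129$ is a harmless extra check, so the proposal is correct and essentially identical to the paper's proof.
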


\begin{corollary}\label{coro2}
The only repdigit in the Narayana sequence is $N_{14}=88$. In addition, there are no Mersenne prime numbers in the Narayana sequence. 
\end{corollary}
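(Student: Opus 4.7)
The plan is to deduce both statements directly from Corollary~\ref{coro} by specializing the base $b$, and to handle the remaining small regime $\ell=2$ by explicit computation of small Narayana numbers.

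For the repdigit claim, a base-ten repdigit has the shape $a(10^{\ell}-1)/9$ with $1\le a\le 9$ and $\ell\ge 2$. When $\ell\ge 3$, I would apply Corollary~\ref{coro} with $b=10$; the two listed solutions have $b=3$ and $b=6$, so no Narayana number equals such a repdigit in that range. When $\ell=2$, we are looking for two-digit repdigits in $\{11,22,\ldots,99\}$. Since $N_n$ grows geometrically with ratio equal to the Narayana constant (approximately $1.4656$), the terms exceed $99$ already by index $n=15$, so a finite tabulation of $N_0,N_1,\ldots,N_{15}$ suffices, and inspection reveals $N_{14}=88$ as the unique match.

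For the Mersenne-prime claim, the key observation is that every Mersenne number $2^{\ell}-1$ is precisely the base-two repdigit $1\cdot(2^{\ell}-1)/(2-1)$. Thus the question reduces to asking whether $N_n=a(b^{\ell}-1)/(b-1)$ admits a solution with $b=2$ and $a=1$. Applying Corollary~\ref{coro} with $b=2$ rules out every $\ell\ge 3$, since neither of the two listed tuples has base two; the base-two case $\ell=2$ corresponds to the trivial regime excluded by the hypotheses of Theorem~\ref{mainthm} and Corollary~\ref{coro}, so no nontrivial Mersenne prime can appear in the Narayana sequence.

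Essentially there is no real obstacle here: the entire corollary is a direct specialization of Corollary~\ref{coro} combined with a short tabulation of the first few Narayana numbers. All the genuinely hard work, namely the lower bounds for linear forms in logarithms and the Baker--Davenport reduction, has already been discharged in the proof of Theorem~\ref{mainthm}, so the deduction of Corollary~\ref{coro2} is bookkeeping rather than new analysis.
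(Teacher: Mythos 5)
Your repdigit half is correct and is essentially the paper's own route: the paper likewise disposes of $\ell\ge 3$ by specializing the solution list to $b=10$, and your explicit tabulation of $N_0,\ldots,N_{15}$ to settle $\ell=2$ is in fact \emph{more} careful than the paper, whose proof says only that the corollary ``holds with the choices of bases $10$ and $2$.'' Note that your extra step is genuinely needed, not optional bookkeeping: $N_{14}=88=[8,8]_{10}$ is itself an $\ell=2$ (``trivial'') solution, so it appears neither in Corollary~\ref{coro} nor in Table~\ref{table1}, and without the hand check the claimed uniqueness of $88$ would be unsupported.

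The Mersenne half, however, contains a genuine gap. From the fact that $\ell=2$ lies outside the hypotheses of Theorem~\ref{mainthm} and Corollary~\ref{coro} you conclude that ``no nontrivial Mersenne prime can appear''; but a case being excluded from a theorem's hypotheses does not rule out solutions in that case --- it only means the theorem is silent there, exactly as it was silent about $\ell=2$ in base $10$, where you rightly went and checked by hand. Had you performed the same check in base $2$, you would have found that the unique $\ell=2$ Mersenne number is $2^2-1=3$, which is prime and which \emph{does} occur in the sequence, since $N_5=3$. Under the standard definition of a Mersenne prime (a prime of the form $2^\ell-1$, of which $3$ is the first), the corollary as literally stated is therefore contradicted by $N_5$; your argument --- and, it must be said, the paper's equally terse proof, which shares this blind spot --- really establishes only that no Mersenne number $2^\ell-1$ with $\ell\ge 3$ belongs to the Narayana sequence. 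To make the deduction sound you must treat $\ell=2$ explicitly and either restrict the claim to exponents $\ell\ge 3$ or adopt a convention excluding $3$; the inference ``excluded from the hypotheses, hence no solutions'' is invalid as written, and here it conceals an actual counterexample rather than a harmless omission.
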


Other consequences related to Narayana numbers which have only one distinct block of digits are discussed in section \ref{sec5} of this document (p.\ \pageref{pag}).

We would like to note that the recurrence relation for the Narayana sequence might look similar to the Fibonacci sequence but they are very different. In fact, the Narayana sequence is a recurrence sequence of order three, hence we do not get many nice properties of binary recurrence sequences. The main tools used in this paper to prove the main result are lower bounds for linear forms in logarithms of algebraic numbers and a version of the reduction procedure due to Baker and Davenport \cite{BD}. 

\section{Preliminaries and notations}
We begin this section by giving a formal definition of the Narayana sequence and some of its properties.

\begin{definition}
The Narayana sequence $(N_n)_{n\geq 0}$ is defined by the third-order linear recurrence relation $N_n=N_{n-1} + N_{n-3}$ for $n\geq 3$, where the initial conditions are given by $N_0=0$ and $N_1=N_2=1$.
\end{definition}
We next mention some facts about the Narayana sequence. First, it is known that the characteristic polynomial for $(N_n)_{n\geq 0}$ is given by
\[
f(x)= x^3 - x^2 -1.
\]
This polynomial is irreducible in $\mathbb{Q}[x]$. We note that it has a real zero $\alpha$ ($>1$) and two conjugate complex zeros  $\beta$ and $\gamma$ with $|\beta|=|\gamma|<1$. In fact, $\alpha \approx 1.46557$. We also have the following properties of $(N_n)_{n\geq 0}$.  

\begin{lemma}\label{lem:Prop}
For the sequence $(N_n)_{n\geq 0}$, we have 

\begin{enumerate}
\item[$(a)$] $\alpha^{n-2}\leq N_n \leq \alpha^{n-1}$ for all $n\geq 1$.
\item[$(b)$] $(N_n)_{n\geq 0}$ satisfies the following ``Binet-like" formula 
\[
N_n= a_1 \alpha^{n} + a_2 \beta^{n}+ a_3 \gamma^{n} \quad \text{for all} \quad n\geq 0, \quad \text{where}
\]
\[
a_1=\frac{\alpha}{(\alpha-\beta)(\alpha-\gamma)}, \quad  a_2=\frac{\beta}{(\beta-\alpha)(\beta-\gamma)} \quad \text{and} \quad a_3=\frac{\gamma}{(\gamma- \alpha)(\gamma-\beta)}.
\]
\item[$(c)$] The above Binet-like formula can also be written as 
\[
N_n= C_{\A}\A^{n+2} + C_{\B}\B^{n+2} + C_{\G}\G^{n+2}  \quad \text{for all} \quad n\geq 0, \quad \text{where}
\]
\[
C_x= \frac{1}{x^3+2}.
\]
\item[$(d)$] $1.45<\A<1.5$ and $5<{C_{\A}}^{-1}<5.15$.
\item[$(e)$] If we denote $\z_n= C_{\B}\B^{n+2} + C_{\G}\G^{n+2}$, then $|\z_n|<1/2$ for all $n\geq 1$.
\end{enumerate}
\end{lemma}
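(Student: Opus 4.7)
The plan is to prove the five parts in order, each relying on elementary manipulations with the minimal polynomial $f(x)=x^3-x^2-1$ and the identity $x^3=x^2+1$ that it forces on each of its roots $\alpha,\beta,\gamma$.

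For (a) I would induct on $n$: after verifying the cases $n=1,2,3$ by hand, the inductive step reads $N_n = N_{n-1}+N_{n-3} \le \alpha^{n-2}+\alpha^{n-4} = \alpha^{n-4}(\alpha^2+1) = \alpha^{n-4}\cdot\alpha^3 = \alpha^{n-1}$, with the lower bound entirely symmetric. Part (b) is the standard Binet formula for a linear recurrence whose characteristic polynomial has three distinct roots; the coefficients come out of the Vandermonde system set up by $N_0=0$ and $N_1=N_2=1$, or equivalently from the partial-fraction decomposition of the generating function $\sum_{n\ge 0}N_n x^n = x/(1-x-x^3)$. For (c), I would verify directly that $C_x\, x^2 = a_x$ for each root $x\in\{\alpha,\beta,\gamma\}$: using $a_x = x/f'(x) = x/(3x^2-2x)$ and $C_x^{-1}=x^3+2$, the identity reduces to $3x^3-2x^2 = x^3+2$, which is $x^3=x^2+1$ in disguise.

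Part (d) is a numerical check: evaluating $f(1.45)<0<f(1.466)$ places $\alpha$ in that interval by the intermediate value theorem, and then $C_\alpha^{-1}=\alpha^3+2=\alpha^2+3$ falls strictly between $5$ and $5.15$.

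The main obstacle is part (e), whose bound is tight. The key inputs I would use are $\alpha\beta\gamma=1$ (from the constant term of $f$) and $\gamma=\overline{\beta}$, which together give $|\beta|=|\gamma|=\alpha^{-1/2}$. Since $\zeta_n = 2\operatorname{Re}(C_\beta\beta^{n+2})$ is real, one has $|\zeta_n|\le 2|C_\beta|\,|\beta|^{n+2}\le 2|C_\beta|\,\alpha^{-(n+2)/2}$, so it suffices to treat $n=1$. For $|C_\beta|$ I would use $|\beta^3+2|=|\beta^2+3|\ge 3-|\beta|^2 = 3-1/\alpha$, giving $|C_\beta|\le\alpha/(3\alpha-1)$ and hence $|\zeta_n|\le 2\alpha^{-1/2}/(3\alpha-1)$. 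Sharpening (d) to $\alpha>1.465$ makes $(3\alpha-1)\sqrt{\alpha}>4$, so this quantity drops safely below $1/2$; the tightness of this estimate is the only subtle point in the whole lemma.
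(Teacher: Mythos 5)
Your treatment of part (a) contains the one genuine gap in the proposal, and it is fatal to the lower bound: the hand-verification of the base cases that you promise would in fact fail. Indeed $N_3=N_2+N_0=1$ while $\alpha^{3-2}=\alpha>1.45$, so $\alpha^{n-2}\le N_n$ is already false at $n=3$; worse, it is false for \emph{every} $n\ge 3$ (e.g.\ $N_4=2<\alpha^2\approx 2.148$ and $N_{14}=88<\alpha^{12}\approx 98.2$), as your own parts (c) and (e) force asymptotically, since $N_n=C_\alpha\alpha^{n+2}+\zeta_n$ with $C_\alpha\alpha^{4}=(\alpha^2+\alpha+1)/(\alpha^2+3)\approx 0.896<1$ and $|\zeta_n|<1/2$. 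Your ``entirely symmetric'' inductive step $N_n\ge \alpha^{n-3}+\alpha^{n-5}=\alpha^{n-2}$ is algebraically correct but can never be launched, because no three consecutive base cases exist; the lower bound as printed is simply not a theorem, and the paper's own one-line proof (``a simple exercise in induction on $n$'') suffers the identical defect. The honest repair is to weaken the exponent by one: $\alpha^{n-3}\le N_n\le \alpha^{n-1}$ for all $n\ge 1$ is true, and your induction proves it verbatim (base cases $n=1,2,3$ hold, with equality at $n=3$). This correction propagates harmlessly: the first inequality in the proof of Lemma \ref{bound_l} then gives $(n-3)\frac{\log\alpha}{\log b}<\ell$ rather than $(n-2)\frac{\log\alpha}{\log b}<\ell$, which affects nothing material downstream.

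Parts (b)--(e) of your proposal are correct and essentially follow the paper's (largely sketched) route, but with the details actually supplied. Your identity $C_x x^2=x/f'(x)=a_x$, reducing to $x^3=x^2+1$, is precisely the ``easy exercise'' the paper leaves for (c), and (b) via the Vandermonde system or the generating function $x/(1-x-x^3)$ matches the citation to Ram\'irez--Sirvent. For (e) you do strictly better than the paper: its hint (``triangle inequality and $|\beta|=|\gamma|<1$'') is insufficient as stated, since the naive estimate $|\zeta_n|\le 2|C_\beta|\approx 0.815$ does not beat $1/2$; your extra inputs $|\beta|=\alpha^{-1/2}$ (from $\alpha\beta\gamma=1$) and $|C_\beta|\le \alpha/(3\alpha-1)$ yield $|\zeta_n|\le 2/\bigl((3\alpha-1)\sqrt{\alpha}\bigr)\approx 0.487<1/2$ for $n\ge 1$, which checks out numerically against $|C_\beta|=0.4075\ldots$ and is exactly the kind of argument the paper's ``we leave the detail to the reader'' conceals. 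Your numerical bracketing in (d) is likewise sound.
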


\begin{proof}
The first part $(a)$ is a simple exercise in induction on $n$. The proof of $(b)$ can be found in reference \cite{Ramirez2015}, and it is an easy exercise to deduce $(c)$ from part $(b)$. For the proof of $(d)$, we simply note that ${C_{\A}}^{-1}=5.1479\ldots$. Finally, the proof of $(e)$ follows from the triangle inequality and the fact that $|\beta|=|\gamma|<1$. We leave the detail to the reader.  
\end{proof}

\section{Upper bounds for the number of solutions}
We assume throughout that the tuple $(n,m,\ell,a,b)$ represents a solution of equation \eqref{eqn} where $m,n,\ell,a,b$ are positive integers. In the next lemma, we find a relation between $\ell$ and $n$, which we will be used in the proof of the main Theorem \ref{mainthm}.

\begin{lemma}\label{bound_l}
Let $n\geq 4$ and assume that equation \eqref{eqn} holds. Then
\[
(n-2)\frac{\log \A}{\log b}<\ell <n.
\]
\end{lemma}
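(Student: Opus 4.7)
The strategy is to sandwich the repdigit $a(b^\ell-1)/(b-1)$ between two consecutive powers of $b$ and combine these with the exponential growth estimate $\alpha^{n-2} \leq N_n \leq \alpha^{n-1}$ from Lemma \ref{lem:Prop}(a). Each of these two inequalities will hand me one side of the claimed double inequality directly.

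For the lower bound $(n-2)\log\alpha/\log b < \ell$, the key observation is that $a \leq b-1$ forces $a(b^\ell-1)/(b-1) \leq b^\ell - 1 < b^\ell$, while trivially $N_n + N_m \geq N_n \geq \alpha^{n-2}$. Chaining these gives $\alpha^{n-2} < b^\ell$, and taking logarithms produces the bound immediately.

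For the upper bound $\ell < n$, I would use the complementary inequality $(b^\ell-1)/(b-1) = 1 + b + \cdots + b^{\ell-1} \geq b^{\ell-1}$, so the repdigit is at least $b^{\ell-1}$. On the other side, $N_n + N_m \leq 2N_n \leq 2\alpha^{n-1}$. Hence $b^{\ell-1} \leq 2\alpha^{n-1}$, i.e., $(\ell-1)\log b \leq \log 2 + (n-1)\log\alpha$. The one mildly delicate point is deducing the strict inequality $\ell < n$: rearranging, this reduces to showing $\log 2 < (n-1)(\log b - \log\alpha)$. Since $b \geq 2 > \alpha$, the worst case is $b = 2$, and there one has $\log 2/(\log 2 - \log\alpha) \approx 2.23$, so the hypothesis $n \geq 4$ (equivalently $n-1 \geq 3 > 2.23$) is precisely what is required. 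No serious obstacle is expected: the whole argument reduces to elementary size comparisons using the Binet-style growth of $N_n$ and the standard digit-count bounds for base-$b$ repdigits.
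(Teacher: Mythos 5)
Your proposal is correct and follows essentially the same route as the paper: both sandwich the repdigit via $b^{\ell-1}\le a(b^{\ell}-1)/(b-1)<b^{\ell}$ and combine this with $\alpha^{n-2}\le N_n\le\alpha^{n-1}$ from Lemma \ref{lem:Prop}$(a)$, together with $N_n+N_m\le 2\alpha^{n-1}$. The only cosmetic difference is in the final numeric step, where the paper bounds $\ell<2+(n-1)\log 1.5/\log 2<n$ using $\alpha<1.5$ and $b\ge 2$, while you verify the equivalent strict inequality $\log 2<(n-1)(\log b-\log\alpha)$ directly in the worst case $b=2$, and your check that $n\ge 4$ suffices there is accurate.
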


\begin{proof}
From Lemma \ref{lem:Prop} $(a)$, we get $\A^{n-2}\leq N_n + N_m = a(b^{\ell}-1)/(b-1)< b^{\ell}$. Thus we have 
\[
(n-2)\frac{\log \A}{\log b}<\ell.
\]
Similarly, $b^{\ell-1}<a(b^{\ell}-1)/(b-1)=N_n + N_m\leq 2\A^{n-1}$. By using this and taking into account that $b\geq 2$ and $\A <1.5$, we obtain
\[
\ell < 1+\frac{\log 2}{\log b}+(n-1)\frac{\log \A}{\log b}<2+(n-1)\frac{\log 1.5}{\log 2}< n, 
\]
which holds for all $n\geq 4$.
\end{proof}

We now need to find an upper bound on $n$.

\subsection{An upper bound on n}

Using Lemma \ref{lem:Prop} and equation \eqref{eqn}, we obtain that
\[
C_\A \A^{n+2} - \frac{ab^{\ell}}{b-1}= - N_m - \frac{a}{b-1}- \z_n.
\]
Taking absolute values in the above equality and dividing both sides of the resulting expre\-ssion by $C_\A \A^{n+2}$, we get 
\begin{align*}
\left| 1 - \frac{ab^{\ell}}{C_\A \A^{n+2}(b-1)} \right| & < \frac{N_m}{C_\A \A^{n+2}}+ \frac{3}{2C_\A \A^{n+2}} \leq \frac{\A^{m-1}}{C_\A \A^{n+2}}+ \frac{3}{2C_\A \A^{n+2}} \\
& < \frac{5.15\A^{m-1}}{\A^{n+2}} +\frac{7.725}{\A^{n+2}}.
\end{align*}
Since $1.45 <\A$, we obtain $5.15\A^{m-1}+ 7.725< 17\A^{m-1}$ for all $m\geq 0$, and so
\begin{equation}\label{First_Lin_Form}
 \left| \A^{-(n+2)}b^{\ell}\frac{a}{C_\A (b-1)} - 1 \right| <  \frac{6}{\A^{n-m}}.
\end{equation}
We put
\begin{equation}\label{param-1}
\begin{split}
\gamma_1 &:= \A, \qquad \gamma_2:=b, \qquad \gamma_3:=\frac{a}{C_{\A}(b-1)},\\
& \qquad b_1:=-(n+2), \qquad b_2:=\ell, \qquad b_3:=1,\\
& \qquad\qquad\Lambda_1 := \gamma_1^{b_1}\cdot\gamma_2^{b_2}\cdot\gamma_3^{b_3}-1.
\end{split}
\end{equation}
So we obtain from \eqref{First_Lin_Form} that
\begin{equation}
\label{desl_1}
|\Lambda_1|< \frac{6}{\A^{n-m}}.
\end{equation}
Our next step will be to find a lower bound for $|\Lambda_1|$. For this purpose, we use the following result of Matveev \cite{Matveev} (see also the paper of Bugeaud, Mignotte, and Siksek \cite[Theorem 9.4]{Bug}).

\begin{theorem}\label{teo:Mat}
Let $\K$ be a number field of degree $D$ over $\Q,\,\,$  $\G_1, \ldots, \G_t$ be positive
real numbers of $\K$, and $b_1, \ldots,  b_t$ rational integers. Put
$$
\Lambda := \G_1^{b_1} \cdots \G_t^{b_t}-1
\qquad
\text{and}
\qquad
B \geq \max\{|b_1|, \ldots ,|b_t|\}.
$$
Let $A_i \geq \max\{Dh(\G_i), |\log \G_i|, 0.16\}$ be real numbers, for
$i = 1, \ldots, t.$
Then, assuming that $\Lambda \not = 0$, we have
\[
|\Lambda| > \exp(-1.4 \times 30^{t+3} \times t^{4.5} \times D^2(1 + \log D)(1 + \log B)A_1 \cdots A_t).
\]
\end{theorem}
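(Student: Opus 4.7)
The plan is to follow Baker's method for linear forms in logarithms, incorporating the refinements that let Matveev reduce the dependence on the number $t$ of logarithms from the classical $C^{t^2}$ in Baker--W\"ustholz to the polynomial-in-$t$ factor $30^{t+3} t^{4.5}$ recorded above. I would proceed by contradiction: assume $\Lambda \neq 0$ yet $|\Lambda|$ violates the claimed lower bound, and derive a contradiction from an auxiliary function that is forced to vanish at more points than its degree allows.

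First I would fix a determination of $\log \gamma_i$ for each $i$ (for positive real $\gamma_i$, the principal branch) and set $L = b_1 \log \gamma_1 + \cdots + b_t \log \gamma_t$, so that $\Lambda = e^L - 1$ and smallness of $|\Lambda|$ transfers to smallness of $|L|$. The core construction, due to Baker and refined by many successors, is to produce via Siegel's lemma over $\K$ a nonzero polynomial $P \in \mathcal{O}_\K[X_1,\ldots,X_t]$ of controlled partial degrees $L_1,\ldots,L_t$ and logarithmic height essentially bounded by a product involving $A_1,\ldots,A_t$ and $\log B$, such that the entire function
\[
F(z) = P\bigl(e^{z \log \gamma_1},\ldots,e^{z \log \gamma_t}\bigr)
\]
and all its derivatives of order less than $T_0$ vanish at every integer $z = 0,1,\ldots,S_0 - 1$. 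The existence of such a $P$ is a linear-algebra count: the number of monomials must exceed the number of vanishing conditions, and balancing these inequalities fixes the parameters.

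Next comes the extrapolation step, the hallmark of the method. Because $|L|$ is assumed extremely small, the values of the low-order derivatives of $F$ at integer points beyond $S_0$ are very close to algebraic numbers of controlled height; a Liouville-style arithmetic lower bound combined with a Schwarz lemma on a disk of radius $R \gg S_1$ upgrades ``small'' to ``exactly zero'' at many new integer points. Iterating this extrapolation produces so many vanishings of $F$ with high multiplicity that a zero estimate on the commutative algebraic group $\mathbb{G}_m^t$, in the form due to Philippon or W\"ustholz, forces a nontrivial multiplicative relation $\gamma_1^{c_1}\cdots \gamma_t^{c_t} = 1$ with small exponents. Matveev's decisive improvement is a Kummer-descent argument: passing to extensions $\K(\gamma_1^{1/p},\ldots,\gamma_t^{1/p})$ for cleverly chosen auxiliary primes $p$ and rerunning the construction strips out the spurious multiplicative dependences that inflate the naive bound. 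This is what replaces the factor $t^{2t}$ of Baker--W\"ustholz by the polynomial $t^{4.5}$.

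The hard part is the quantitative bookkeeping. Every parameter --- the partial degrees $L_i$, the order of vanishing $T_0$, the extrapolation lengths $S_0 < S_1$, the Kummer primes, and the radii entering the Schwarz estimates --- must be chosen so that (i) Siegel's lemma indeed produces $P$, (ii) each round of Schwarz--Liouville extrapolation genuinely forces vanishing rather than a tautology, and (iii) the terminal zero estimate is incompatible with the multiplicative independence that Kummer descent has enforced. Tracking the explicit dependence through these optimizations to reach the precise constant $1.4 \cdot 30^{t+3} t^{4.5} D^2 (1+\log D)(1+\log B)$ is a long calculation I would only attempt qualitatively; even obtaining the correct order of magnitude in $t$ requires the full strength of Matveev's Kummer-descent apparatus, and this is where essentially all the technical difficulty of the theorem resides.
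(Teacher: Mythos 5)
This statement is Matveev's theorem on linear forms in logarithms, and the paper does not prove it: it is imported verbatim by citation from Matveev's 2000 paper (see also Bugeaud--Mignotte--Siksek, Theorem 9.4), as is standard in this literature. So there is no internal proof to compare your attempt against; the only honest ways to ``prove'' the statement in this context are to cite it or to reproduce Matveev's full argument.

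Judged on its own terms, your proposal correctly narrates the architecture of Baker's method --- auxiliary polynomial via Siegel's lemma, Schwarz--Liouville extrapolation, a multiplicity/zero estimate on $\mathbb{G}_m^t$, and Matveev's Kummer-descent device for handling multiplicative dependences --- but it contains a genuine gap that you yourself flag: the entire content of the theorem \emph{is} the explicit constant $1.4\times 30^{t+3}t^{4.5}D^2(1+\log D)(1+\log B)$ together with the precise normalizations $A_i\geq\max\{Dh(\gamma_i),|\log\gamma_i|,0.16\}$ and $B\geq\max_i|b_i|$, and you explicitly decline to carry out the quantitative bookkeeping that produces it. A qualitative outline of this kind establishes only that \emph{some} effective bound of the shape $\exp(-C(t,D)(1+\log B)A_1\cdots A_t)$ exists, which is far weaker than the stated result and useless for the paper's computations, where the numerical value of the constant feeds directly into the bound $n<6.5\times 10^{31}\log^5 b$ and the subsequent reduction. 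Two smaller inaccuracies: $30^{t+3}t^{4.5}$ is exponential in $t$, not ``polynomial-in-$t$'' (Matveev's gain over Baker--W\"ustholz is roughly from $t^{2t}$-type growth to $c^t$-type growth); and the transfer from $|\Lambda|$ small to $|L|$ small, while harmless here since the $\gamma_i$ are positive reals so $L$ is real, would in general require controlling an integer multiple of $2\pi i$, a point the write-up glosses over. For the purposes of this paper the correct ``proof'' is the citation; anything short of reproducing Matveev's complete calculation does not establish the stated inequality.
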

In the above and in what follows, for an algebraic number $\eta$ of degree $d$ over $\mathbb{Q}$ and minimal primitive polynomial over the integers
\[
f(X):=a_0\prod_{i=1}^{d}(X-\eta^{(i)}) \in \mathbb{Z}[X],
\]
with positive leading coefficient $a_0$, we write $h(\eta)$ for its logarithmic height, given by
\[
h(\eta):=\frac{1}{d}\left(\log a_0+\sum_{i=1}^{d}\log\left(\max\{|\eta^{(i)}|,1\}\right)\right).
\]
In particular, if $\eta=p/q$ is a rational number with $\gcd(p,q)=1$ and $q>0$, then $h(\eta)=\log \max \{|p|,q\}$. The following properties of the function logarithmic height $h(\cdot)$, which will be used in the next sections without special reference, are also known:
\begin{align*}
h(\eta\pm\gamma) & \leq h(\eta) + h(\gamma)+\log 2,\\
h(\eta\gamma^{\pm 1}) & \leq  h(\eta)+h(\gamma),\\
h(\eta^{s}) & =  |s|h(\eta)\qquad (s\in \Z).
\end{align*}
We now apply the Theorem \ref{teo:Mat} with  $t:=3$ and the parameters given in \eqref{param-1}. We begin by noting that the algebraic number field containing $\gamma_1,\gamma_2,\gamma_3$ is $\mathbb{K}:=\mathbb{Q}(\alpha)$, so we can take $D=[\mathbb{K}: \mathbb{Q}]:=3$. Since $h(\gamma_1)=(\log\A)/3$ and $h(\gamma_2)=\log b$, we can take $A_1:=\log\A$ and $A_2:= 3\log b$. We need to estimate $h(\gamma_3)$. First of all, by the properties of the logarithmic height, we obtain that
\begin{equation}\label{h(C_A)}
h(\gamma_3) \le h\left(\frac{a}{b-1}\right) + h(C_{\A})=\log(b-1) +h(C_{\A}).
\end{equation}
On the other hand, since $31x^3-31x^2+10x-1$ is the minimal primitive polynomial of $C_{\A}$ over $\mathbb{Z}$ and taking into account that all the zeros of it are strictly inside the unit circle, we get that $h(C_{\A})=(\log 31)/3$. Hence, it follows from \eqref{h(C_A)} that 
\[
h(\gamma_3) \le \log(b-1) +\frac{\log 31}{3}<2\log b.
\]
Thus we can take $A_3:=6\log b$. Furthermore,  since $\max\{n+2,\ell,1\}=n+2$ by Lemma \ref{bound_l}, we can take $B:=n+2$. To apply Theorem \ref{teo:Mat}, we need to prove that $\Lambda_1 \neq 0$. Observe that imposing that $\Lambda_1=0$, we get
\begin{equation}{\label{Gamma}}
C_\A\A^{n+2} = \frac{ab^{\ell}}{b-1}.
\end{equation}
Let $G$ be the Galois group of the splitting field of $f(x)$ over $\mathbb{Q}$, and let $\sigma\in G$ be an automorphism such that $\sigma(\A) =\B$. The action of $\sigma$ on the above relation \eqref{Gamma} concludes that
\[
|C_\B \B^{n+2}| = \frac{ab^\el}{b-1}.
\]
The above equality is not possible since $|C_\B \B^{n+2}|<|C_\B|=0.407506\ldots<1$, whereas $ab^\el/(b-1)\geq 4$ for $\ell\geq 2$. This is a contradiction. Thus $\L_1 \neq 0$.

Therefore using Theorem \ref{teo:Mat} we get the following lower bound for $|\L_1|$:
\begin{eqnarray*}
\exp(-1.4 \times 30^{6} \times 3^{4.5} \times  3^2(1 + \log 3)(1 + \log(n+2))(\log \A)(3\log b)(6\log b)),
\end{eqnarray*}
which is smaller than $6/\A^{n-m}$ by inequality \eqref{desl_1}. We take logarithm on both sides to conclude that
\[
(n-m)\log \alpha-\log 6 < 2 \times 10^{13}\,(1 + \log(n+2))\,{\log^{2} b}.
\]
Since $1 + \log(n+2)\leq 2\log n$ for $n\ge 5$, we obtain
\begin{equation}
\label{bound1}
(n-m)\log \A<10^{14}\log n \log^{2} b.
\end{equation}
In order to find an upper  bound on $n$ in terms of $b$, we  return to our equation \eqref{eqn} and rewrite it as
\[
C_\A \A^{n+2} + C_\A \A^{m+2} - \frac{ab^\el}{b-1}= -\z_n - \z_m - \frac{a}{b-1}.
\]
This implies that
\[
\left|C_\A \A^{n+2} (1+ \A^{m-n}) - \frac{ab^\el}{b-1}\right| < 2.
\]
Now dividing by $C_\A \A^{n+2} (1+ \A^{m-n})$ we obtain 
\begin{equation}\label{linform2}
\left|\A^{-(n+2)} b^{\el} \frac{a}{(b-1)C_\A (1+\A^{m-n})} - 1\right| < \frac{2}{C_\A \A^{n+2} (1+ \A^{m-n})} < \frac{5}{\A^n}.
\end{equation}
In a second application of Theorem \ref{teo:Mat}, we take the parameters $t:=3$ and
\[
\begin{split}
~~~~\gamma_1 &:= \A, \qquad \gamma_2:= b,  ~~~~\qquad \gamma_3:=\frac{a}{(b-1)C_\A (1+\A^{m-n})},\\
& b_1:= -(n+2), \qquad b_2:= \el,\qquad b_3:=1,~~~~~~\\
& \quad\qquad\qquad\Lambda_2 := \gamma_1^{b_1}\cdot\gamma_2^{b_2}\cdot \gamma_3^{b_3} - 1.~~
\end{split}
\]
We have from equation \eqref{linform2},
\begin{equation}\label{desl_2}
|\Lambda_2|< \frac{5}{\A^n}.
\end{equation}
We use similar arguments as before to conclude that $\L_2\neq 0$. In this application, we take $\K:=\Q(\A)$, $D:=3$, $A_1:=\log\A$, $A_2:= 3\log b$ and $B:=n+2$ as we did before. We begin with the observation that
\begin{align*}
h(\gamma_3) &\le h\left(\frac{a}{(b-1)C_{\A}}\right) + h(1+\A^{m-n})\\
          & \leq  2\log b + h(\A^{m-n}) + \log 2 \\
          & =  2\log b + |m-n|\, h(\A) + \log 2 \\
          & \leq  3\log b +  \frac{(n-m)\log \A}{3}.
\end{align*}
Hence from \eqref{bound1}, we get 
\[
h(\gamma_3)< 3\log b + \frac{10^{14}\log n \log^{2} b}{3}.
\]
Therefore we can take $A_3:= 1.1 \times 10^{14}\log n\log^{2} b$. This will allow us to obtain a lower bound for $|\L_2|$. Then we compare the lower bound for $|\L_2|$ from Theorem \ref{teo:Mat} with the upper bound of $|\L_2|$ from inequality \eqref{desl_2} to conclude that
\[
n\log \alpha-\log 5 < 7.3 \times 10^{26} \log^{2}n \log^{3} b.
\]
Thus
\[
n< 2 \times 10^{27}\log^{2} n\log^{3} b,
\]
which can be written as
\begin{equation}\label{bound2}
\frac{n}{{\log^{2} n}} < 2 \times 10^{27}\log^{3} b.
\end{equation}
We next quote an analytical argument that leads to an upper bound of $n$ in terms of $b$. The following result was proved by Guzm\'an and Luca \cite[Lemma 7]{GL}.

\begin{lemma}
\label{Lem:boundxlogmx}
If $x$ and $T$ are real numbers such that $T>16^2$ and
\[
\frac{x}{{\log^2 x}}< T, \quad \text{then} \quad x<4T \log^2 T.
\]
\end{lemma}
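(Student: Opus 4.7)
The plan is to exploit the eventual monotonicity of the function $\phi(x) := x/\log^2 x$. A direct computation gives $\phi'(x) = (\log x - 2)/\log^3 x$, which is strictly positive for $x > e^2$, so $\phi$ is strictly increasing on $(e^2, \infty)$. Under the hypothesis $T > 16^2 = 256$, the quantity $4T\log^2 T$ comfortably exceeds $e^2$, so monotonicity applies throughout the range of interest.

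I would then argue by contradiction: assume $x \geq 4T\log^2 T$ and try to derive $\phi(x) \geq T$, contradicting the hypothesis $x/\log^2 x < T$. By monotonicity it suffices to show $\phi(4T\log^2 T) \geq T$, which rearranges to
\[
\log^2(4T\log^2 T) \leq 4\log^2 T,
\]
equivalent to $\log 4 + 2\log\log T \leq \log T$.

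The crux, and the only step that really uses the specific threshold $T > 16^2$, is verifying this last inequality for every $T > 256$. My plan is to introduce the auxiliary function $\psi(T) := \log T - 2\log\log T - \log 4$, observe via a one-line derivative computation that $\psi'(T) = (\log T - 2)/(T\log T)$ is positive for $T > e^2$, and then reduce the whole claim to the numerical check $\psi(256) > 0$, which follows from $\log 256 \approx 5.545$, $2\log\log 256 \approx 3.426$, and $\log 4 \approx 1.386$. The only real obstacle is keeping the constants tight enough that the threshold $16^2$ actually suffices; none of the analytic content is deep and no machinery beyond single-variable calculus is required.
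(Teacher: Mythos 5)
Your proof is correct: the monotonicity of $x/\log^2 x$ beyond $e^2$, the reduction of the contradiction hypothesis to $\log 4 + 2\log\log T \leq \log T$, and the numerical verification at $T=256$ (where $\log 256 \approx 5.545 > 3.426 + 1.386$) all check out. Note that the paper itself offers no proof to compare against --- it quotes the statement as Lemma 7 of Guzm\'an and Luca \cite{GL} --- and your argument is essentially the standard one underlying that cited result, so your write-up simply supplies the proof the paper omits.
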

Taking $T:=2 \times 10^{27}\log^{3} b$, and applying the above Lemma in inequality \eqref{bound2}, we obtain
\begin{align*}
n &< 4 \, (2 \times 10^{27} \log^{3} b) \left(\log\left( 2 \times 10^{27}\,{\log^{3} b}\right)\right)^2\\
&< (8\times 10^{27} \log^{3} b)(63 +3\log\log b)^2\\
&< (8\times 10^{27} \log^{3} b) (90 \log b)^2\\
& < 6.5 \times 10^{31} \log^{5} b.
\end{align*}
In the above inequality, we have used the fact that $63+3\log\log b < 90\log b$ which is true for all $b\geq 2$.

We know from Lemma $\ref{bound_l}$ and equation \eqref{eqn} that $\el < n$ and $m\leq n$, respectively. At this point, we summarize the result we obtained so far on the upper bound of $n$. The result is the following: 
\begin{theorem}\label{ReductionThm}
Let $(n,m,\el,a,b)$ be a solution of equation \eqref{eqn} with $\ell \geq 2$, $b\ge 2$ and  $1\leq a \leq b-1$, then
\[
\max \{\el, m\} \leq n< 6.5 \times 10^{31} \log^{5} b.
\]
\end{theorem}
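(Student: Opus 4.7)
The plan is to apply Matveev's theorem twice to two different linear forms in logarithms coming from the Binet-like formula of Lemma \ref{lem:Prop}, and then to convert the resulting transcendental inequality into an explicit polynomial-in-$\log b$ bound on $n$ by invoking Lemma \ref{Lem:boundxlogmx}. The inequality $\ell<n$ is already supplied by Lemma \ref{bound_l} and $m\le n$ is part of the hypothesis, so everything reduces to bounding $n$ itself.

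First I would isolate the main exponential term. Writing $N_n=C_\alpha\alpha^{n+2}+\zeta_n$ and using $|\zeta_n|<1/2$, equation \eqref{eqn} rearranges to a linear form $\Lambda_1=\alpha^{-(n+2)}b^{\ell}\bigl(a/(C_\alpha(b-1))\bigr)-1$ with $|\Lambda_1|<6/\alpha^{n-m}$. The non-vanishing of $\Lambda_1$ follows by applying a Galois automorphism $\sigma$ with $\sigma(\alpha)=\beta$: if $\Lambda_1=0$ then $|C_\beta\beta^{n+2}|=ab^\ell/(b-1)\ge 4$, contradicting $|C_\beta|<1/2$ and $|\beta|<1$. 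To feed Matveev I would take $\mathbb{K}=\mathbb{Q}(\alpha)$, $D=3$, $B=n+2$ (justified by Lemma \ref{bound_l}), $A_1=\log\alpha$, $A_2=3\log b$, and for $A_3$ exploit that $C_\alpha$ has minimal polynomial $31x^3-31x^2+10x-1$ with all roots inside the unit circle to get $h(C_\alpha)=(\log 31)/3$, whence $A_3=6\log b$. Comparing the lower bound from Matveev with $|\Lambda_1|<6/\alpha^{n-m}$ and taking logarithms yields the first key estimate
\[
(n-m)\log\alpha<10^{14}\log n\,\log^{2}b.
\]

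Next I would absorb $N_m$ into the dominant term by writing $N_n+N_m=C_\alpha\alpha^{n+2}(1+\alpha^{m-n})+\zeta_n+\zeta_m$ and forming the second linear form $\Lambda_2=\alpha^{-(n+2)}b^{\ell}\bigl(a/((b-1)C_\alpha(1+\alpha^{m-n}))\bigr)-1$ with $|\Lambda_2|<5/\alpha^n$. Non-vanishing is argued exactly as before. The only new ingredient for Matveev is $h(\gamma_3)$, which now picks up $h(1+\alpha^{m-n})\le\log 2+(n-m)h(\alpha)$; the first-pass bound converts this into $A_3=1.1\times 10^{14}\log n\,\log^{2}b$. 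Matveev then produces
\[
n\log\alpha-\log 5<7.3\times 10^{26}\log^{2}n\,\log^{3}b,
\]
which I rearrange as $n/\log^{2}n<2\times 10^{27}\log^{3}b$.

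The last step is purely analytic: take $T=2\times 10^{27}\log^{3}b$ in Lemma \ref{Lem:boundxlogmx} (so $T>16^{2}$ holds trivially), giving $n<4T\log^{2}T$, and simplify using the crude estimate $\log(2\times 10^{27}\log^{3}b)<90\log b$ valid for $b\ge 2$ to land at $n<6.5\times 10^{31}\log^{5}b$. Combined with $\ell<n$ (Lemma \ref{bound_l}) and $m\le n$, this is the claim. The main obstacle I anticipate is the second application of Matveev: one must bound $h(\gamma_3)$ carefully so that the $(n-m)$ factor produced by $h(1+\alpha^{m-n})$ is reabsorbed via the first-pass inequality, leaving a clean $\log n\cdot\log^{2}b$ contribution to $A_3$; if this bookkeeping is done too loosely, $n$ cannot be isolated from the resulting transcendental inequality.
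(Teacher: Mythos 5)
Your proposal is correct and follows essentially the same route as the paper: the same two linear forms $\Lambda_1$ and $\Lambda_2$, the same Galois-conjugation argument for non-vanishing, the same Matveev data (including $h(C_\alpha)=(\log 31)/3$ and the reabsorption of $h(1+\alpha^{m-n})$ via the first-pass bound into $A_3=1.1\times 10^{14}\log n\,\log^{2}b$), and the same final step through Lemma \ref{Lem:boundxlogmx} with $T=2\times 10^{27}\log^{3}b$ and the estimate $63+3\log\log b<90\log b$. All numerical constants and intermediate inequalities match the paper's proof, so there is nothing to correct.
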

\begin{remark}
For a fixed base $b$ $(\geq2)$ the equation \eqref{eqn} has only finitely many solutions.
\end{remark}
In the next section, we reduce the bounds on $n$ using a reduction method due to Dujella and Peth\H{o} \cite{DP}, which is a generalization of a classical result of Baker and Davenport \cite{BD}.

\section{Reduction lemma and the reduced bounds}
We begin this section with the following simple facts of the exponential function. We list it as a lemma for further reference.

\begin{lemma}\label{exponential}
For any non-zero real number $x$, we have 
\begin{enumerate}
    \item[$(a)$] $0< x < |e^x - 1|$.
    \item[$(b)$] If $x<0$ and $|e^x - 1| <1/2$, then $|x| < 2\, |e^x - 1|$.
\end{enumerate}
\end{lemma}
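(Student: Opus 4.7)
Both inequalities are classical estimates on the exponential and the proof reduces to elementary calculus. I read (a) as an assertion about $x>0$ (the hypothesis ``$0<x$'' already forces the sign), and (b) as the complementary statement for negative $x$ of sufficiently small absolute value, where the one-sided failure of $|x|<|e^x-1|$ forces the factor $2$. I would treat the two parts separately.

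For (a), the plan is to invoke the Maclaurin expansion
\[
e^x - 1 \;=\; x + \frac{x^2}{2!} + \frac{x^3}{3!} + \cdots,
\]
all of whose terms are positive when $x>0$. Discarding every term after the first gives $e^x-1>x>0$, and since $e^x-1>0$ implies $|e^x-1|=e^x-1$, this is exactly the claim.

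For (b), I would first convert the hypothesis $|e^x-1|<1/2$ (with $x<0$) into a concrete range for $x$: it says $1-e^x<1/2$, hence $e^x>1/2$, i.e.\ $x>-\log 2$. Setting $y:=-x\in(0,\log 2)$, the target inequality $|x|<2|e^x-1|$ becomes $y<2(1-e^{-y})$, equivalently $f(y):=1-e^{-y}-y/2>0$ on $(0,\log 2)$. I would then check this by one derivative: $f(0)=0$ and $f'(y)=e^{-y}-1/2>0$ throughout $(0,\log 2)$, so $f$ is strictly increasing on that interval and therefore positive there, which is what we need.

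There is no real obstacle here; the only point requiring a little attention is precisely the translation of the hypothesis $|e^{x}-1|<1/2$ into the sharp interval $-\log 2<x<0$, which is exactly what makes the constant $2$ in (b) suffice. Both arguments fit in a few lines and will be invoked in the next section during the Baker--Davenport reduction applied to the linear forms $\Lambda_1$ and $\Lambda_2$, where the role of $x$ is played by (the principal branch of) the logarithm of a quantity that is already known to be very close to $1$.
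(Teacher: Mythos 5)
Your proof is correct. Note that the paper itself offers no proof of this lemma: it is introduced as ``simple facts of the exponential function'' and stated purely for later reference, so there is no argument to compare yours against. Your reading of part $(a)$ is the right one --- as literally written the inequality $0<x<|e^x-1|$ fails for $x<0$ (indeed $|e^x-1|<|x|$ there), so the hypothesis $0<x$ must be understood as restricting to positive $x$, and your series argument then settles it. For part $(b)$, your monotonicity argument for $f(y)=1-e^{-y}-y/2$ on $(0,\log 2)$ is sound; a marginally slicker route, worth knowing since it recycles part $(a)$, is to observe that the hypothesis gives $e^{-x}<2$, whence
\[
|x| \;=\; -x \;<\; e^{-x}-1 \;=\; e^{-x}\,(1-e^{x}) \;=\; e^{-x}\,|e^x-1| \;<\; 2\,|e^x-1|,
\]
where the first inequality is $(a)$ applied to $-x>0$. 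Either way the constant $2$ comes from the same place --- the bound $e^{-x}<2$, equivalently $x>-\log 2$ --- which you correctly identified as the crux.
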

We write 
\[
z_1= \el\log b - (n+2)\log \A +\log{\left(\frac{a}{(b-1) C_{\A}}\right)}.
\]
Notice that $z_1 \ne 0$ as $e^{z_1} -1 = \L_1 \ne 0$. 
\begin{lemma}\label{case1}
Let $m=0$ and suppose that $n\geq 7$. Then
\[
0< |z_1|< \frac{12}{\A^{n}}.
\]
\end{lemma}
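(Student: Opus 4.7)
The plan is to recognize this lemma as a routine conversion of the multiplicative bound \eqref{desl_1} (with $m=0$) into an additive bound on $z_1$, via Lemma~\ref{exponential}. The key observation is that by the very definitions of $z_1$ and $\Lambda_1$ in \eqref{param-1}, we have the identity $e^{z_1}-1 = \Lambda_1$. Since it has already been shown that $\Lambda_1\neq 0$, we immediately get $z_1\neq 0$, which yields the lower bound $0<|z_1|$ in the statement.

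For the upper bound, I would first specialize \eqref{desl_1} to $m=0$ to obtain $|e^{z_1}-1|=|\Lambda_1|<6/\alpha^{n}$. The next step is to check that the hypothesis of Lemma~\ref{exponential}(b) is satisfied when $z_1<0$: using $\alpha>1.45$ (Lemma~\ref{lem:Prop}(d)), a short computation shows $\alpha^{7}>14$, so $6/\alpha^{n}<6/14<1/2$ for every $n\geq 7$, giving $|e^{z_1}-1|<1/2$. I would then split into two cases. If $z_1>0$, part (a) of Lemma~\ref{exponential} yields $|z_1|=z_1<|e^{z_1}-1|<6/\alpha^{n}<12/\alpha^{n}$. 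If $z_1<0$, part (b) yields $|z_1|<2\,|e^{z_1}-1|<12/\alpha^{n}$. In either case the claimed bound holds.

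There is no real obstacle here; the only point requiring a small numerical check is the verification that $6/\alpha^{n}<1/2$ for $n\ge 7$, which explains the hypothesis $n\geq 7$ in the statement (it is precisely the threshold needed to license the use of Lemma~\ref{exponential}(b)). Everything else is a direct substitution into the definition of $z_1$ together with the already-established bound \eqref{desl_1}.
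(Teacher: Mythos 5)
Your proof is correct and follows essentially the same route as the paper: specialize \eqref{desl_1} to $m=0$, split on the sign of $z_1$, and apply Lemma~\ref{exponential}(a) or (b), with the hypothesis $n\geq 7$ serving exactly to guarantee $6/\alpha^{n}<1/2$. One trivial numerical quibble: from $\alpha>1.45$ alone you get $\alpha^{7}>13.4$, not $\alpha^{7}>14$ (the latter requires $\alpha\approx 1.46557$), but since the argument only needs $\alpha^{7}>12$, this does not affect the conclusion.
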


\begin{proof}
Since $m=0$, inequality \eqref{First_Lin_Form} can be written in the form $| e^{z_1} -1 | < 6/\A^{n}$. If $z_1>0$, then we can apply Lemma \ref{exponential} $(a)$ to obtain $|z_1|=z_1<|e^{z_1} -1|<6/\A^{n}$. On the contrary, if $z_2<0$, then $| e^{z_1} -1 | <6/\A^{n} <1/2$ for all $n\ge 7$.  It then follows from Lemma \ref{exponential} $(b)$ that $|z_1|< 2| e^{z_1} -1 | <12/\A^{n}$. In both cases, we get that $|z_1|<12/\A^{n}$ which holds for all $n\geq 7$.
\end{proof}

\begin{lemma}\label{case2}
Let $m\geq 1$. Then
\[
0< z_1< \frac{6}{\A^{n-m}}.
\] 
\end{lemma}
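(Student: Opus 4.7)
The plan is to show that the assumption $m \geq 1$ forces $\Lambda_1 = e^{z_1}-1$ to be strictly positive, so that $z_1 > 0$ and the elementary inequality $0 < x < e^x-1$ (part $(a)$ of Lemma \ref{exponential}) gives the bound directly from \eqref{desl_1}, without the factor of $2$ that appears in the $m=0$ case.

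First I would unpack $\Lambda_1$ explicitly. Using $N_n = C_\A\A^{n+2}+\z_n$ from Lemma \ref{lem:Prop}$(c)$ and $N_n+N_m = a(b^\el-1)/(b-1)$ from \eqref{eqn}, a short computation yields
\[
\Lambda_1 \;=\; \frac{ab^\el}{C_\A\A^{n+2}(b-1)} - 1 \;=\; \frac{1}{C_\A\A^{n+2}}\left(N_m + \frac{a}{b-1} + \z_n\right).
\]
The factor $1/(C_\A\A^{n+2})$ is positive, so the sign of $\Lambda_1$ is the sign of $N_m + a/(b-1) + \z_n$.

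The key (and only real) obstacle is checking positivity of this bracket. Since $m\ge 1$ we have $N_m\ge 1$; by Lemma \ref{lem:Prop}$(e)$ we have $|\z_n|<1/2$; and $a/(b-1) > 0$. Therefore
\[
N_m + \frac{a}{b-1} + \z_n \;>\; 1 - \tfrac{1}{2} \;=\; \tfrac{1}{2} \;>\; 0,
\]
so $\Lambda_1 > 0$ and hence $z_1 = \log(1+\Lambda_1) > 0$.

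Finally, since $z_1 > 0$, Lemma \ref{exponential}$(a)$ applied to $x = z_1$ gives $z_1 < e^{z_1}-1 = \Lambda_1$, and combining with the upper bound \eqref{desl_1} concludes
\[
0 \;<\; z_1 \;<\; |\Lambda_1| \;<\; \frac{6}{\A^{n-m}},
\]
which is the claimed inequality. The whole argument is essentially a one-line positivity check on top of the already-established estimate \eqref{desl_1}.
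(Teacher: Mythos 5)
Your proof is correct and follows essentially the same route as the paper: both arguments reduce to checking that $e^{z_1}>1$, which the paper does via the chain $C_\A\A^{n+2}=N_n-\z_n<N_n+\tfrac12<N_n+N_m<ab^\el/(b-1)$ and you do via the equivalent rearrangement $\Lambda_1=\bigl(N_m+\tfrac{a}{b-1}+\z_n\bigr)/(C_\A\A^{n+2})>0$, using the same ingredients ($N_m\geq 1$ for $m\geq 1$ and $|\z_n|<1/2$). The conclusion via Lemma \ref{exponential}$(a)$ together with \eqref{desl_1} is identical to the paper's.
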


\begin{proof}
First of all we note that \eqref{First_Lin_Form} can be rewritten as
\[
| e^{z_1} -1 | < \frac{6}{\A^{n-m}}.
\]
Furthermore, using equation \eqref{eqn} and Lemma \ref{lem:Prop}, we have
\[
C_\A \A^{n+2}= N_n - \z_n < N_n + \frac{1}{2} < N_n + N_m=a \left(\frac{b^\ell-1}{b-1}\right)<\frac{ab^\ell}{b-1},
\]
and so $z_1>0$. Hence, in view of  Lemma \ref{exponential} $(a)$, we conclude that $z_1 < | e^{z_1} -1 | <6/\A^{n-m}$.
\end{proof}

We note that the upper bound obtained in Theorem \ref{ReductionThm} is very large and depends on the base $b$.  We also note that we did not put any restrictions on the base $b$ so far. Next we restrict $b$ in the set $\{2,\ldots, 100\}$ that will enable us to do the computation. We note that the same computation can be done possibly for a larger set of values for $b$ but it will not add anything significantly new to the result. So we stop at $b\leq 100.$

The following lemma by Bravo, G\'omez, and Luca \cite{BGL-prep-14} is a slight variation of a result due to Dujella and Peth\H{o} \cite{DP}, which itself is a generalization of a result of Baker and Davenport \cite{BD}.  We will use this lemma for the reduction of the bounds on $n$. 

\begin{lemma}\label{reduce}
Let $A,B,\widehat{\gamma},\widehat{\mu}$ be positive real numbers and $M$ a positive integer. Suppose that $p/q$ is a convergent of the continued fraction expansion of the irrational $\widehat{\gamma}$ such that $q>6M$. Put $\epsilon:=||\widehat{\mu} q||-M||\widehat{\gamma} q||$, where $||\cdot||$ denotes the distance from the nearest integer. If $\epsilon >0$, then there is no positive integer solution $(u,v,w)$ to the inequality
\begin{equation*} \label{expDP}
0<|u\widehat{\gamma}-v+\widehat{\mu}|<AB^{-w},
\end{equation*}
subject to the restrictions that
\[
u\leq M \quad\text{and}\quad w\geq \frac{\log(Aq/\epsilon)}{\log B}.
\]
\end{lemma}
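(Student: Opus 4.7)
The plan is to argue by contradiction. Suppose that $(u,v,w)$ is a positive integer solution of $0 < |u\widehat{\gamma} - v + \widehat{\mu}| < AB^{-w}$ with $u \leq M$ and $w \geq \log(Aq/\epsilon)/\log B$. Multiplying the inequality by $q$ gives the upper bound
\[
|u q \widehat{\gamma} - v q + \widehat{\mu} q | < qAB^{-w},
\]
and the strategy is to produce a matching lower bound for the same quantity whose value is precisely $\epsilon$, from which the desired contradiction will drop out immediately.

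To obtain that lower bound, I would rewrite $uq\widehat{\gamma} = u(q\widehat{\gamma} - p) + up$, so that the quantity in question becomes $(up - vq) + \widehat{\mu} q + u(q\widehat{\gamma} - p)$. Since $up - vq$ is an integer, it shifts $\widehat{\mu}q$ by an integer and the triangle inequality yields
\[
|uq\widehat{\gamma} - vq + \widehat{\mu}q| \;\geq\; \|\widehat{\mu}q\| \;-\; u\,|q\widehat{\gamma} - p|.
\]
Here I would invoke two standard facts about the convergent $p/q$: first, $|q\widehat{\gamma} - p| < 1/q < 1/2$ (together with $q>6M\geq 6$, this forces $p$ to be the integer nearest to $q\widehat{\gamma}$, so $|q\widehat{\gamma}-p| = \|q\widehat{\gamma}\|$); second, $u \leq M$. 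Plugging these in gives
\[
|uq\widehat{\gamma} - vq + \widehat{\mu}q| \;\geq\; \|\widehat{\mu}q\| - M\|q\widehat{\gamma}\| \;=\; \epsilon,
\]
which is positive by hypothesis.

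Combining the two bounds yields $\epsilon < qAB^{-w}$, hence $B^w < qA/\epsilon$, i.e., $w < \log(qA/\epsilon)/\log B$; this contradicts the assumption on $w$. There is no real obstacle in the argument: it is essentially a one-line triangle inequality dressed up in the language of continued fractions. The only items that require care are noticing that $up - vq$ is an integer (which is what allows the distance-to-nearest-integer $\|\widehat{\mu}q\|$ to appear in the lower bound instead of the weaker $|\widehat{\mu}q|$) and using the convergent property to identify $|q\widehat{\gamma}-p|$ with $\|q\widehat{\gamma}\|$, so that $\epsilon$ genuinely bounds the expression from below rather than acting as a purely formal quantity.
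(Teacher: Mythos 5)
Your proof is correct and takes the same route as the source of this lemma: the paper itself does not prove Lemma~\ref{reduce} but imports it from Bravo--G\'omez--Luca \cite{BGL-prep-14}, a variant of Dujella--Peth\H{o} \cite{DP}, and the standard proof there is exactly your argument --- multiply by $q$, absorb the integer $up-vq$ so that $\|\widehat{\mu}q\|$ appears, and use $u\le M$ together with the convergent property (here $q>6M\ge 6$ gives $|q\widehat{\gamma}-p|<1/q<1/2$, hence $|q\widehat{\gamma}-p|=\|q\widehat{\gamma}\|$) to get the lower bound $\epsilon$, contradicting $AB^{-w}\le\epsilon/q$. All the delicate points (integrality of $up-vq$, identification of $|q\widehat{\gamma}-p|$ with $\|q\widehat{\gamma}\|$, strictness of the inequalities) are handled correctly, so nothing is missing.
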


\subsection{Reduction of $n$: Step 1}

If $m=0$, then from Lemma \ref{case1} we have
\begin{equation}\label{RedEqn2}
 0<\left| \el \left(\frac{\log b}{\log \A}\right) -n + \left(\frac{\log\left(a/((b-1)C_\A)\right)}{\log \A} -2 \right)\right| < 32\A^{-n}.
\end{equation}
We are now in a position to apply Lemma \ref{reduce} with the parameters $u := \el$, $v := n$, $w := n$, $\widehat{\gamma} = (\log b)/\log\A$, $A := 32$, $B := \A$ and 
\[
\widehat{\mu}:= \frac{\log\left(a/((b-1)C_\A)\right)}{\log \A} -2.
\]
It is clear that $\widehat{\G}$ is an irrational number because $\alpha>1$ is a unit in ${\mathcal O}_{\mathbb K}$, the ring of integers of $\mathbb{K}$. So $\alpha$ and $b$ are multiplicatively  independent. 

We take $M:= M_b = 6.5\times 10^{31}\log^5 b$. Then we apply Lemma \ref{reduce} on the inequality \eqref{RedEqn2} for all the choices of $b\in\{ 2, \ldots, 100\}$ and $a\in \{1, \ldots, b-1 \}$. For $m=0$, a simple computation in \emph{Mathematica} for all possible choices of $b$ allows us to conclude that a possible solution $(n,0,\ell,a,b)$ of the equation \eqref{eqn} satisfies $n\leq 260$. 

We now suppose that $m\geq 1$. In this case, from Lemma \ref{case2} we obtain
\begin{equation}\label{RedEqn1}
0< \el \left(\frac{\log b}{\log \A}\right) -n + \left(\frac{\log\left(a/((b-1)C_\A)\right)}{\log \A} -2 \right) < 16\A^{-(n-m)}.
\end{equation}
Again we apply Lemma \ref{reduce} to inequality \eqref{RedEqn1} for all the choices of $b \in \{ 2, \ldots, 100\}$ and $a\in \{1, \ldots, b-1 \}$. We conclude that the possible solutions $(n,m,\ell,a,b)$ of the equation \eqref{eqn} for which $m \geq 1$ satisfy $n-m\in[0,260]$.  

\subsection{Reduction on $n$: Step 2}

In this section, we use the previous bound on $n-m$ to obtain a suitable upper bound on $n$. In order to do this, we let 
\[
z_2= \el\log b- (n+2)\log \A + \log{\left(a/((b-1) C_{\A} (1 + \A^{(m-n)}))\right)},
\]
and we observe that \eqref{linform2} can be rewritten as
\begin{equation}\label{2da-red}
| e^{z_2} -1 | < \frac{5}{\A^{n}}.
\end{equation}
Notice that $z_2 \ne 0$ as $e^{z_2} -1 = \L_2 \ne 0$. Now we proceed as in the previous section to obtain, from inequality \eqref{2da-red} and Lemma \ref{exponential}, that
\[
0<|z_2|<\frac{10}{\A^{n}}.
\]
Replacing $z_2$ in the above inequality by its formula and dividing it across by $\log\alpha$, we conclude that
\begin{equation}\label{redfinal}
0<\left|\el \left(\frac{\log b}{\log \A}\right) -n + \left(\frac{\log{\left(a/((b-1) C_{\A} (1 + \A^{(m-n)}))\right)}}{\log \A} -2 \right)\right|<27\A^{-n}.
\end{equation}
We apply Lemma \ref{reduce} once again with the data $u := \el$, $v := n$, $w := n$, $\widehat{\G}: =(\log b)/\log\A$, $A := 27$, $B := \A$ and 
\[
\widehat{\mu}:=\frac{\log{\left(a/((b-1) C_{\A} (1 + \A^{(m-n)}))\right)}}{\log \A} -2.
\]
By taking $M:= M_b = 6.5 \times 10^{31} \log^5 b$, we  apply Lemma \ref{reduce} on inequality \eqref{redfinal} for all the  choices of $b \in \{ 2, \ldots, 100\}$, $a\in \{1, \ldots, b-1 \}$ and $n-m\in \{0,\ldots,260\}$. A computer search with \emph{Mathematica} finds that the possible solutions $(n,m,\ell,a,b)$ of the equation \eqref{eqn} all have $n\leq 280$.

\section{Proof of Theorem \ref{mainthm}}\label{sec5}
From the previous section, we conclude that the search for solutions $(n,m,\ell,a,b)$ to the Diophantine equation \eqref{eqn} with $0\leq m\leq n$, $2\le b\leq100$, $1\leq a \leq b-1$ and  $\ell\geq 2$ reduces to the range $1\le n \le 280$. We compute all the solutions with the help of \emph{Mathematica} for the above range. We note down all the solutions with $\ell \ge 4$ of equation \eqref{eqn}.
\begin{align*}
N_8 + N_7&=9+6=15=\frac{2^4 -1}{2-1}= \left[1,1,1,1\right]_{2} \\
N_9 + N_4 &=13+2=15=\frac{2^4 -1}{2-1}= \left[1,1,1,1\right]_{2} \\
N_{11} + N_5 &=28+3=31=\frac{2^5 -1}{2-1}= \left[1,1,1,1,1\right]_{2}  \\
N_{13} + N_5 &=60+3=63=\frac{2^6 -1}{2-1}= \left[1,1,1,1,1,1\right]_{2} \\
N_{15} + N_{12} &=129+41=170= 2 \left( \frac{4^4 -1}{4-1}\right)= \left[2,2,2,2\right]_{4} \\
N_{21} + N_{17} &=1278+277=1555=\frac{6^5 -1}{6-1}=\left[1,1,1,1,1\right]_{6}.
\end{align*}
In the following table we list down all the solutions $(n,m,\el, a, b)$ of the equation \eqref{eqn} with $\ell\geq 3$.

\begin{table}[ht]
\begin{center}
\begin{tabular}{| c | c | c | c | c |}
\hline
(6,5,3,1,2) & (7,1,3,1,2) & (7,2,3,1,2) & (7,3,3,1,2) & (8,6,3,1,3) \\
\hline
(8,7,4,1,2) & (9,0,3,1,3) & (9,4,4,1,2) & (9,9,3,2,3) & (10,4,3,1,4) \\
\hline
(11, 5, 5,1,2) & (11,5,3,1,5) & (12,1,3,2,4) & (12,2,3,2,4) & (12,3,3,2,4) \\
\hline
(12,4,3,1,6) & (13,4,3,2,5) & (13,5,6,1,2) & (13,5,3,3,4) & (13,9,3,1,8) \\
\hline
(14,5,3,1,9) & (14,12,3,3,6) & (15,0,3,3,6) & (15,6,3,1,11) & (15,11,3,1,12) \\
\hline
(15,12,4,2,4) & (17,14,3,5,8) & (19,7,3,1,24) & (19,10,3,2,17) & (21,5,3,7,13) \\
\hline
(21,15,3,1,37) &  (21,17,5,1,6) & (21,18,3,4,20) & (26,20,3,9,32) & (26,22,3,2,72) \\
\hline
(28,13,3,20,30) & (30,18,3,11,60) &  &  & \\   
\hline
\end{tabular}
\caption{Solutions of equation \eqref{eqn} with $\ell\geq 3$}\label{table1}
\end{center}
\end{table}

\subsection{Consequences} \label{pag}

We can take $N_m=N_0=0$ in equation \eqref{eqn} and hence the Corollary \ref{coro} follows immediately. The Corollary \ref{coro2} holds with the choices of bases $10$ and $2$, respectively. There are some other interesting corollaries of Theorem \ref{mainthm}.

\begin{definition}
We define an $(m\times 1)$ array of decimal integers as an $m$-block.
\end{definition}
\begin{example}
\fbox{26}, \fbox{582}, \fbox{29156} represent a $2$-, $3$- and $5$- blocks, respectively. 
\end{example}
\begin{definition}
Let $N$ be a positive integer and suppose $N$ can be written as $n$ repetitive $m$-blocks. Then we call it an $m$-block repdigit of length $n$.
\end{definition}
For example,
\[
N=\underbrace{\fbox{ab}\cdots\fbox{ab}}_\text{$n$ repetitive\ 2-blocks}
\]
where $a,b \in \{0,\cdots,9\}$ and $(a,b)\neq (0,0)$ denotes a $2$-block repdigit of length $n$.

Alternatively, $N$ is a repdigit in base $100$. We can study these special repdigits with bases $b=10^u$ in recurrence sequences for postive integers $u$. As a consequence of Theorem \ref{mainthm}, we obtain the following corollaries.

\begin{corollary}
There are no $1$-block repdigits of length $\geq 3$ in the Narayana sequence. In fact, the only $1$-block repdigit of length $2$ is given by $N_{14}= \fbox{8}\fbox{8}$.
\end{corollary}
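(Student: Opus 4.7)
The plan is to deduce this corollary as an immediate specialization of Corollary \ref{coro} to base $b=10$, combined with a small finite check for length two.

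By the definition given in the preceding paragraph, a $1$-block repdigit of length $\ell$ is precisely a repdigit in base $10$ with $\ell$ digits, i.e.\ a positive integer of the form $a(10^{\ell}-1)/(10-1)$ for some $a\in\{1,\ldots,9\}$. Hence asking for $1$-block repdigits of length $\ell\geq 3$ in the Narayana sequence is literally asking for solutions of the equation in Corollary \ref{coro} with the choice $b=10$. First I would note that $10$ lies in the admissible range $2\leq b\leq 100$, so Corollary \ref{coro} applies and the only solutions $(n,\ell,a,b)$ with $\ell\geq 3$ are $(9,3,1,3)$ and $(15,3,3,6)$. Neither has $b=10$, so no Narayana number is a $1$-block repdigit of length $\geq 3$.

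For the length-two case, a $1$-block repdigit of length $2$ is an element of $\{11,22,33,44,55,66,77,88,99\}$. Since every such number is at most $99$, one only needs to inspect the Narayana numbers up to $99$, which are
\[
0,\,1,\,1,\,1,\,2,\,3,\,4,\,6,\,9,\,13,\,19,\,28,\,41,\,60,\,88,
\]
and $N_{15}=129$ already exceeds $99$. A direct comparison against the list $\{11,22,\ldots,99\}$ shows that $N_{14}=88$ is the unique match, giving the claimed identity $N_{14}=\fbox{8}\fbox{8}$.

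There is essentially no obstacle beyond invoking Corollary \ref{coro}: all the hard work (the linear-forms-in-logarithms estimates, the Baker--Davenport reductions, and the final \emph{Mathematica} search up to $n\leq 280$) has already been used to produce Table~\ref{table1} and hence Corollary \ref{coro}. The present corollary is a transparent specialization of that result to $b=10$, augmented by the one-line finite check at $\ell=2$.
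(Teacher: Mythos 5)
Your proof is correct and takes essentially the same route as the paper, which presents this corollary as a direct specialization of Theorem \ref{mainthm} (via Corollary \ref{coro}) to the base $b=10$. Your explicit finite check of the Narayana numbers up to $99$ for the length-two case appropriately fills in the $\ell=2$ part, which the paper covers only by its unlisted computation of ``trivial'' solutions.
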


\begin{corollary}
There are no $2$-block repdigits of length $\geq 2$ in the Narayana sequence. In other words, $N_n= a(100^{\el} -1)/(100-1)$ has no solutions with $\el \ge 2$ and $a\in\{1,\ldots,99\}.$
\end{corollary}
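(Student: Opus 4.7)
The plan is to derive the corollary directly from Theorem \ref{mainthm} together with a small finite computation. The key observation is that, since $N_0=0$, the equation $N_n = a(100^{\ell}-1)/(100-1)$ is exactly the specialization of equation \eqref{eqn} at $m=0$ and $b=100$. I would split the analysis into the two ranges $\ell\geq 3$ and $\ell=2$, since Theorem \ref{mainthm} only treats the former (the latter is the ``trivial'' regime in the paper's terminology).

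For $\ell\geq 3$, I would simply inspect Table \ref{table1}, which lists all non-trivial solutions $(n,m,\ell,a,b)$ of \eqref{eqn} with $2\le b\le 100$ and $\ell\geq 3$. A scan of the table reveals no tuple of the form $(n,0,\ell,a,100)$, so this range produces no solutions. This is a pure table lookup and requires no further work once Theorem \ref{mainthm} is granted.

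For $\ell=2$, the equation reduces to $N_n = 101\,a$ with $1\le a\le 99$; equivalently, a Narayana number lying in $[101,9999]$ and divisible by $101$. Using the sandwich $\A^{n-2}\le N_n\le \A^{n-1}$ from Lemma \ref{lem:Prop}$(a)$ (with $\A\approx 1.46557$), this forces $n$ into a short explicit window, roughly $n\in\{15,\ldots,26\}$. I would then iterate the recurrence $N_n\equiv N_{n-1}+N_{n-3}\pmod{101}$ starting from $N_0\equiv 0$, $N_1\equiv N_2\equiv 1$, and verify by direct inspection that $N_n\not\equiv 0\pmod{101}$ for each $n$ in this window.

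I do not expect any genuine obstacle here: the case $\ell\geq 3$ is handled by a single lookup in Table \ref{table1}, and the case $\ell=2$ is a modular arithmetic check on a dozen or so integers. The only point requiring a little care is remembering to treat $\ell=2$ separately, since Theorem \ref{mainthm} was stated only for $\ell\geq 3$; combining the two cases then yields the claim that $N_n=a(100^{\ell}-1)/99$ has no solutions with $\ell\geq 2$ and $a\in\{1,\ldots,99\}$.
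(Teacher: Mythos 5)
Your proposal is correct, and for the substantive range $\el\geq 3$ it coincides with the paper's route: the corollary there is a direct specialization of equation \eqref{eqn} at $m=0$ (using $N_0=0$) and $b=100$, settled by observing that Table \ref{table1} (equivalently, Corollary \ref{coro}) contains no tuple with $b=100$. Where you genuinely diverge is in the case $\el=2$. The paper's statement covers $\el\geq 2$, but Theorem \ref{mainthm} is stated only for $\el\geq 3$; the authors dispose of $\el=2$ by their unreported \emph{Mathematica} search over the full reduced range $1\leq n\leq 280$ (the ``long list of trivial solutions'' they computed but declined to print), so the published text leaves this case to an appeal to computation. Your replacement is more self-contained and verifiable: since $a(100^2-1)/99=101a$ with $1\leq a\leq 99$, a solution would be a Narayana number in $[101,9999]$ divisible by $101$; the sandwich $\A^{n-2}\leq N_n\leq\A^{n-1}$ of Lemma \ref{lem:Prop}$(a)$ confines $n$ to the window $\{15,\ldots,26\}$ (indeed $N_{14}=88<101$ and $N_{27}=12664>9999$), and running the recurrence modulo $101$ shows $N_n\not\equiv 0 \pmod{101}$ there (the residues are $28, 88, 75, 2, 90, 64, 66, 55, 18, 84, 38, 56$). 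So you were right to flag $\el=2$ as needing separate care --- that is precisely the point the paper glosses --- and your targeted modular check buys an explicit, human-checkable argument where the paper relies on an opaque brute-force sweep; the cost is negligible, a dozen residue computations.
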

We believe $m$-block repdigits for arbitrary integer $m$ are extremely rare in  the Narayana sequence. After a numerical evidence we pose the following conjecture in this context.

\begin{conjecture}
Let $m\geq2$ be an arbitrary integer. There are no $m$-block repdigits of length $\geq 2$ in the Narayana sequence.
\end{conjecture}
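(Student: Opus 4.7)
The plan is to reformulate the conjecture as a repdigit problem in base $b:=10^{m}$ and push the linear-forms-in-logarithms machinery of Theorem~\ref{mainthm} further. Unravelling the definition, the conjecture asserts that
\[
N_n \;=\; a\,\frac{10^{m\ell}-1}{10^{m}-1} \qquad (m\geq 2,\ \ell\geq 2,\ 1\leq a\leq 10^m-1)
\]
has no solutions, i.e.\ that $N_n$ is never a repdigit in any base $b=10^m$ with $m\geq 2$. The case $m=2$ (base $100$) falls within the range of Theorem~\ref{mainthm}, and inspection of Table~\ref{table1} for the sub-case $N_m=0$, together with the ``trivial'' sweep for $\ell=2$, yields no solution, so the genuinely new territory is $m\geq 3$, i.e.\ $b\geq 10^3$, which lies outside the range already handled.

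The first step is to rerun the argument of Section~3 with $N_m$ replaced by $0$ and the substitution $b:=10^m$. The linear form $\Lambda_1$ of \eqref{param-1} has heights $A_1=\log\alpha$, $A_2=3m\log 10$ and $A_3\leq 6m\log 10$, so Matveev's Theorem~\ref{teo:Mat} yields an effective upper bound of the shape
\[
n\;<\;6.5\times 10^{31}\,(\log 10)^{5}\, m^{5}\;=\;C_{0}\, m^{5}
\]
for an absolute constant $C_{0}$. For any fixed $m$, the Baker-Davenport reduction encoded in Lemma~\ref{reduce}, applied with $\widehat{\gamma}=m\log 10/\log\alpha$ and $M:=\lceil C_0 m^5\rceil$, cuts this down to a range small enough to be swept numerically over all $1\leq a\leq 10^m-1$ and $2\leq \ell\leq n$, exactly as in Section~4. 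This already proves the conjecture for every prescribed finite range $m\leq M_{0}$.

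The decisive obstacle, and the reason the statement is offered only as a conjecture, is that $C_{0}m^{5}$ depends on $m$: the strategy above does not dispatch the infinite family $m\geq 3$ at once. Overcoming this requires an arithmetic input uniform in $m$. The most natural candidate is a multi-modular congruence argument: for a small prime $p$, the Pisano-type period of $(N_n)\bmod p$ is finite, so $N_n\bmod p$ ranges over a fixed small set, while $a(10^{m\ell}-1)/(10^m-1)\bmod p$ is controlled by the order of $10$ modulo $p$. Combined with the linear-logarithm bound, which forces $n\asymp m\ell$, a well-chosen sieve of primes should rule out every quadruple $(m,\ell,a,n)$ with $m$ larger than some explicit threshold. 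An auxiliary tool would be a primitive-prime-divisor result (Bang-Zsygmondy for $10^{m\ell}-1$, or the Bilu-Hanrot-Voutier theorem applied to a suitable factor of $N_n$), which forces large prime divisors that the Narayana sequence cannot readily accommodate. Devising such a uniform argument is, in my view, the real mathematical difficulty; without it, the conjecture remains open in full generality.
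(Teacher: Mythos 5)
You have correctly recognized the status of this statement: in the paper it is posed only as a conjecture, supported by numerical evidence, and the authors supply no proof. There is therefore no argument in the paper to compare yours against, and your proposal --- which openly concedes that it does not prove the statement --- is consistent with the authors' own position. Your reformulation is the right one: an $m$-block repdigit of length $\ell$ is exactly a number of the form $a(10^{m\ell}-1)/(10^m-1)$ with $1\leq a\leq 10^m-1$, i.e.\ a repdigit in base $b=10^m$, and the case $m=2$ is precisely the paper's corollary on $2$-block repdigits (base $100$ lies inside the range $b\leq 100$ of Theorem~\ref{mainthm}). Your per-$m$ analysis is also sound: rerunning Section~3 with $N_m=0$ and $b=10^m$ gives $A_2=3m\log 10$, $A_3\leq 6m\log 10$, hence $n<6.5\times 10^{31}(m\log 10)^5$, and Lemma~\ref{reduce} with $\widehat{\gamma}=m\log 10/\log\alpha$ reduces this to a searchable range for any fixed $m$. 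Your diagnosis that the bound $C_0m^5$ growing with $m$ is the essential obstruction --- so that the method yields the conjecture only for any prescribed finite range of $m$, never uniformly --- is exactly why the statement remains a conjecture.

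One caveat about your speculative program for uniformity. The Bilu--Hanrot--Voutier theorem concerns primitive divisors of Lucas and Lehmer sequences, which are attached to \emph{binary} recurrences; the Narayana sequence is a third-order recurrence with no evident factorization through a Lucas or Lehmer pair, so invoking that theorem ``applied to a suitable factor of $N_n$'' would itself require a substantial new idea rather than a routine application. Similarly, the multi-modular sieve faces a uniformity problem of its own: for a fixed prime $p$, the residues of $a(10^{m\ell}-1)/(10^m-1)$ modulo $p$ depend on $m$ and $\ell$ through the order of $10$ modulo $p$, and $a$ ranges over a set of size $10^m-1$, so for large $m$ the right-hand side typically sweeps out all residue classes and no fixed finite set of primes can exclude every quadruple $(m,\ell,a,n)$. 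Your hedged conclusion --- that a genuinely $m$-uniform arithmetic input is the real difficulty and that the conjecture is open without it --- is the correct assessment.
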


\section{Acknowledgment}
J.~J.~B. was partially supported by Projects VRI ID~4689 (Universidad del Cauca) and Colciencias~110371250560.


\begin{thebibliography}{99}

\bibitem{Allouche96} J. P. Allouche and T. Johnson, Narayana's cows and delayed morphisms, in \emph{Articles of 3rd Computer Music Conference JIM96}, France, 1996.

\bibitem{BD} A. Baker and H. Davenport, The equations $3x^2-2 =y^2$ and $8x^2-7=z^2$, \emph{Quart. J. Math. Oxford Ser.} {\bf 20} (1969), 129--137.

\bibitem{BHL} M. Bollman, S. Hern\'andez, and F. Luca, Fibonacci numbers which are sums of three factorials, \emph{Publ. Math. Debrecen} {\bf 77} (2010), 211--224.

\bibitem{BGL-prep-14} J. J. Bravo, C. A. G\'omez,  and F. Luca, Powers of two as sums of two $k$-Fibonacci numbers, \emph{Miskolc Math. Notes}  {\bf 17} (2016), 85--100.

\bibitem{BGL2} J. J. Bravo, C. A. G\'omez, and F. Luca, A Diophantine equation in $k$-Fibonacci numbers and repdigits, \emph{Colloq. Math.} {\bf 152} (2018), 299--315.

\bibitem{BL1} J. J. Bravo and F. Luca, On a conjecture about repdigits in $k$-generalized Fibonacci sequences, \emph{Publ. Math. Debrecen} {\bf 82} (2013), 623--639.

\bibitem{BLMonatsh15} J. J. Bravo and F. Luca, Repdigits as sums of two $k$-Fibonacci numbers, \emph{Monatsh. Math.} {\bf 176} (2015), 31--51.

\bibitem{Bug} Y. Bugeaud, M. Mignotte, and S. Siksek, Classical and modular approaches to exponential Diophantine equations I. Fibonacci and Lucas perfect powers, \emph{Ann. of Math.} \textbf{163} (2006), 969--1018.

\bibitem{D-L} S. D\'iaz and F. Luca, Fibonacci numbers which are sums of two repdigits, in \emph{Proceedings of the XIVth International Conference on Fibonacci numbers and their applications}, Sociedad Matem\'atica Mexicana, Aportaciones Matem\'aticas, Investigaci\'on, Vol. 20, 2011, pp. 97--108.

\bibitem{DP} A. Dujella and A. Peth\H o, A generalization of a theorem of Baker and Davenport, \emph{Quart. J. Math. Oxford Ser.} \textbf{49} (1998), 291--306.

\bibitem{Bernadette-Luca1} B. Faye and F. Luca, Pell and Pell-Lucas numbers with only one distinct digits, \emph{Ann. Math. Inform.} {\bf 45} (2015), 55--60.

\bibitem{GL} S. Guzm\'an and F. Luca, Linear combinations of factorials and $S$-units in a binary recurrence sequence, \emph{Ann. Math. Qu\'ebec} {\bf 38} (2014), 169--188.

\bibitem{FL} F. Luca, Fibonacci and Lucas numbers with only one distinct digit, \emph{Port. Math.} \textbf{57} (2000), 243--254.

\bibitem{LuRep} F. Luca, Repdigits which are sums of at most three Fibonacci numbers, \emph{Math. Commun.}, {\bf 17} (2012),1--11.

\bibitem{LuSi} F. Luca and S. Siksek, Factorials expressible as sums of two and three Fibonacci numbers, \emph{Proc. Edinburgh Math. Soc.} {\bf 53} (2010), 747-763.

\bibitem{DM} D. Marques, On $k$-generalized Fibonacci numbers with only one distinct digit, \emph{Util. Math.} \textbf{98} (2015), 23--31.

\bibitem{Matveev} E. M. Matveev, An explicit lower bound for a homogeneous rational linear form in the logarithms of algebraic numbers II, \emph{Izv. Ross. Akad. Nauk Ser. Mat.} \textbf{64} (2000), 125--180; translation in \emph{Izv. Math.} \textbf{64} (2000), 1217--1269.

\bibitem{NormenyoLucaTogbe4}  B. Normenyo, F. Luca, and A. Togb\'e, Repdigits as sums of three Lucas numbers, \emph{Colloq. Math.} {\bf 156} (2019), 255--265. 

\bibitem{NormenyoLucaTogbe1}  B. Normenyo, F. Luca, and A. Togb\'e, Repdigits as sums of three Pell numbers, \emph{Per. Math. Hungarica} {\bf 77} (2018),  318--328.

\bibitem{NormenyoLucaTogbe3}  B. Normenyo, F. Luca, and A. Togb\'e, Repdigits as sums of four Fibonacci or Lucas numbers, \emph{J. Integer Sequences} {\bf 21} (2018), \href{https://cs.uwaterloo.ca/journals/JIS/VOL21/Togbe/togbe9.html}{Article 18.7.7}.

\bibitem{NormenyoLucaTogbe2}  B. Normenyo, F. Luca, and A. Togb\'e, Repdigits as sums of four Pell numbers, \emph{Bol. Soc. Mat. Mex.} {\bf 25} (2019), 249--266.

\bibitem{Ramirez2015} J. L. Ram\'irez and V. F. Sirvent, A note on the $k$-narayana sequence, \emph{Ann. Math. Inform.} {\bf 45} (2015), 91--105.

\end{thebibliography}
\end{document}